\newcommand{\R}{{\mathbb R}}
\newcommand{\Z}{{\mathbb Z}}
\newcommand{\sgn}{{\operatorname{sgn}\,}}
\newcommand{\eps}{\varepsilon}
\newtheorem{prop}{Proposition}
\newtheorem{lem}{Lemma}
\title{Hermite's Identity and the Quadratic Reciprocity Law}
\author{Franz Lemmermeyer}
\address{M\"orikeweg 1, 73489 Jagstzell}
\email{hb3@uni-heidelberg.de}
\begin{document}
\maketitle
In this note we give a proof of the quadratic reciprocity law based on
Gauss's Lemma and Hermite's identity.

Let $p = 2m+1$ and $q = 2n+1$ be odd primes, and let
$A = \{1, 2, \ldots m\}$ and $B = \{1, 2, \ldots n\}$ denote two
half systems modulo $p$ and $q$, respectively.

For each $a \in A$ we have $qa \equiv r_a \bmod p$ for some $0 < r_a < p$,
hence either $r_a \in A$ or $p-r_a \in A$. In particular,
$r_a \equiv \eps_a a' \bmod p$, where $\eps_a = \pm 1$ and $a' \in A$.
Taking the product of these congruences we find
$$ q^{\frac{p-1}2} \cdot a! \equiv \prod \eps_a a' \bmod p, $$
and since $a! = \prod a'$ and $q^{\frac{p-1}2} \equiv (\frac qp) \bmod p$
we obtain
$$ \Big( \frac qp \Big) = \prod_{a \in A} \eps_a. $$

Now $\eps_a = 1$ if $0< r_a < \frac p2$ and $\eps_a = -1$ otherwise; 
on the other hand we see that
$$ \Big\lfloor \frac{2qa}p \Big\rfloor - 2 \Big\lfloor \frac{qa}p \Big\rfloor
   = \begin{cases} 0 & \text{ if } r_a < \frac p2, \\
                   1 & \text{ if } r_a > \frac p2. \end{cases} $$
Thus $\eps_a = (-1)^{\lfloor \frac{2qa}p \rfloor}$, and we have proved

\begin{lem}[Gauss's Lemma]
  $$\Big( \frac qp \Big) =  (-1)^M \quad \text{for} \quad
     M = \sum_{a \in A} \Big\lfloor \frac{2qa}p \Big\rfloor . $$
\end{lem}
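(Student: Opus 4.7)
The plan is to assemble the ingredients already developed in the paragraphs preceding the statement; essentially everything needed has been produced, and what remains is to package the pieces into the single formula $(q/p) = (-1)^M$.

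In hand is the multiplicative formula $(q/p) = \prod_{a \in A} \eps_a$, together with the characterization that $\eps_a = +1$ precisely when the least positive residue $r_a$ of $qa$ modulo $p$ lies in the lower half $(0, p/2)$. The natural strategy is to rewrite each $\eps_a$ as $(-1)^{k_a}$ for an integer $k_a$ whose sum over $a \in A$ equals $M$, and then to use $\prod_a (-1)^{k_a} = (-1)^{\sum_a k_a}$ to pull the exponent out of the product.

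The right choice is $k_a = \lfloor 2qa/p \rfloor$, which reduces everything to the floor identity
\[
\lfloor 2qa/p \rfloor - 2 \lfloor qa/p \rfloor = \begin{cases} 0 & \text{if } r_a < p/2, \\ 1 & \text{if } r_a > p/2. \end{cases}
\]
I would verify this by writing $qa = \lfloor qa/p \rfloor \cdot p + r_a$ with $0 < r_a < p$, so that $2qa = 2\lfloor qa/p \rfloor \cdot p + 2r_a$; the case distinction then depends only on whether $2r_a < p$ or $2r_a > p$. (The borderline $r_a = p/2$ cannot occur since $p$ is odd and $\gcd(a,p) = 1$.) Because $2 \lfloor qa/p \rfloor$ is even, the parity of $\lfloor 2qa/p \rfloor$ agrees with the above floor difference, giving $\eps_a = (-1)^{\lfloor 2qa/p \rfloor}$. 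Substituting into the product formula for $(q/p)$ and summing the exponents yields $(q/p) = (-1)^M$. The only genuinely computational obstacle is the floor identity; once that case analysis is in hand, the rest of the proof is bookkeeping.
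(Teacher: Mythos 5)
Your proposal is correct and follows the same route as the paper: it combines the product formula $(q/p)=\prod_{a\in A}\eps_a$ with the observation that $\eps_a=(-1)^{\lfloor 2qa/p\rfloor}$, which is exactly the argument given in the paragraphs preceding the lemma. Your explicit verification of the floor identity via $qa=\lfloor qa/p\rfloor\,p+r_a$ is a detail the paper leaves implicit, but it is the intended justification.
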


Next we transform the sum $M$ modulo $2$. 

\begin{lem}
  We have
  $$ \sum_{a \in A} \Big\lfloor \frac{2qa}p \Big\rfloor
     \equiv \sum_{a \in A} \Big\lfloor \frac{qa}p \Big\rfloor \bmod 2. $$
\end{lem}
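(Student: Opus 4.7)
My plan is to apply Hermite's identity $\lfloor 2x \rfloor = \lfloor x \rfloor + \lfloor x + 1/2 \rfloor$ with $x = qa/p$ and sum over $a \in A$, thereby reducing the desired congruence to the statement
$$ T \defequal \sum_{a \in A} \Big\lfloor \frac{qa}{p} + \frac{1}{2} \Big\rfloor \equiv 0 \pmod 2. $$
Because $p$ is odd, the difference $\lfloor qa/p + 1/2 \rfloor - \lfloor qa/p \rfloor$ equals $1$ precisely when $r_a > p/2$, i.e.\ when $\eps_a = -1$. Writing $S = \sum_{a \in A}\lfloor qa/p \rfloor$ and $N_- = \#\{a \in A : \eps_a = -1\}$, this yields $T = S + N_-$, so the problem collapses to proving $S \equiv N_- \pmod 2$.

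For that final step, I would compute $\sum_{a \in A} r_a$ modulo $2$ in two ways. From the division $qa = p\lfloor qa/p \rfloor + r_a$ and the fact that both $p$ and $q$ are odd, summation gives
$$ \sum_{a \in A} r_a \;=\; \frac{q\,m(m+1)}{2} - pS \;\equiv\; \frac{m(m+1)}{2} + S \pmod 2. $$
On the other hand, since $r_a$ equals $a'$ or $p - a'$ according as $\eps_a = +1$ or $-1$, and since $a \mapsto a'$ is a permutation of $A$, one finds
$$ \sum_{a \in A} r_a \;\equiv\; \sum_{a \in A} a' + N_- \;=\; \frac{m(m+1)}{2} + N_- \pmod 2. $$
Equating the two expressions yields $S \equiv N_- \pmod 2$, and hence $T = S + N_-$ is even.

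The main technical point is keeping the two evaluations of $\sum r_a$ compatible: the contribution $m(m+1)/2$ from $\sum_A a'$ must appear symmetrically on both sides so that it cancels on equating them, leaving behind the clean congruence $S \equiv N_-$. Hermite's identity does the work of isolating the half-integer shift $\lfloor qa/p + 1/2 \rfloor$, and then Gauss's bijection $a \mapsto a'$ (inherited from the proof of Gauss's Lemma above) forces the parity match.
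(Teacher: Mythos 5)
Your proof is correct, but it takes a genuinely different route from the paper. The paper argues entirely inside the floor-function sums: the terms $\lfloor \frac{2qa}p \rfloor$ with $a < \frac p4$ are matched with the even-index terms $\lfloor \frac{q\cdot 2a}p\rfloor$ on the right, and each remaining term is paired with an odd-index term via $\lfloor \frac{2qa}p \rfloor + \lfloor \frac{q(p-2a)}p \rfloor = q-1$, which is even. You instead split $\lfloor \frac{2qa}p\rfloor = \lfloor \frac{qa}p\rfloor + \lfloor \frac{qa}p + \frac12\rfloor$ (the $n=2$ case of Hermite's identity, a nice thematic fit for this paper) and reduce everything to the congruence $S \equiv N_- \bmod 2$, which you establish by evaluating $\sum_{a\in A} r_a$ modulo $2$ in two ways; your reliance on the bijectivity of $a \mapsto a'$ is legitimate, since the paper already needs $\prod a' = m!$ in its derivation of Gauss's Lemma. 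Your central identity $S \equiv N_- \bmod 2$ is exactly the classical passage from Gauss's Lemma to Eisenstein's lemma, and it is worth noting that it makes the stated lemma almost superfluous: since the paper has already shown $(\frac qp) = \prod_{a\in A}\eps_a = (-1)^{N_-}$, your congruence yields $(\frac qp) = (-1)^S$ directly, bypassing the quantity $M = \sum_{a\in A}\lfloor \frac{2qa}p\rfloor$ altogether. What the paper's pairing buys is self-containment --- it never reintroduces the remainders $r_a$ or the signs $\eps_a$, staying purely at the level of floor identities; what your argument buys is a conceptual explanation of \emph{why} the two sums agree mod $2$, namely that both are congruent to the number of negative signs in Gauss's Lemma.
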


\begin{proof}
  The terms $\lfloor \frac{2qa}p \rfloor$ with  $a < \frac p4$
  occur as $\lfloor \frac{q \cdot 2a}p \rfloor$ in the sum on the right.
  We pair the remaining terms  $\lfloor \frac{2qa}p \rfloor$ with
  $a > \frac p4$ with the terms $\lfloor \frac{qa}p \rfloor$ with
  odd values of $a$ in the sum on the right by pairing
  $\lfloor \frac{2qa}p \rfloor$ with $\lfloor \frac{q(p-2a)}p \rfloor$. 
  The claim follows from the observation that the sum of these two
  terms is even; this in turn follows from 
  $\lfloor \frac{2qa}p \rfloor + \lfloor \frac{q(p-2a)}p \rfloor
  = \lfloor \frac{2qa}p \rfloor + \lfloor q - \frac{2qa}p \rfloor
  = \lfloor \frac{2qa}p \rfloor + q - 1 - \lfloor \frac{2qa}p \rfloor
  = q-1$, and we are done.

  Here we have used the fact that
  $\lfloor a-x \rfloor = a-1 = a - 1 - \lfloor x \rfloor$ for all
  natural numbers $a$ and real numbers $x \in \R \setminus \Z$.
  In fact we have  $\lfloor a-x \rfloor = a-1 = a - 1 - \lfloor x \rfloor$
  when $0 < x < 1$, and the claim follows from the fact that
  both sides  have period $1$.  
\end{proof}

Now we know that 
$$  \Big( \frac qp \Big)  = (-1)^\mu \quad  \text{for} \quad 
    \mu = \sum_{a \in A} \Big\lfloor \frac{qa}p \Big\rfloor
    \qquad  \text{and} \qquad 
    \Big( \frac pq \Big)  = (-1)^\nu  \quad  \text{for} \quad 
    \nu = \sum_{b \in B} \Big\lfloor \frac{pb}q \Big\rfloor. $$
This implies
\begin{equation}\label{Emn}
  \Big( \frac pq \Big) \Big( \frac qp \Big) = (-1)^{\mu + \nu}.
\end{equation}
For proving that $\mu + \nu = \frac{p-1}2 \frac{q-1}2 $ (from which
quadratic reciprocity follows) we use Hermite's identity:

\begin{lem}
  For all real values $x \ge 0$ and all natural numbers $n \ge 1$ we have
  \begin{equation}\label{HermI}
    \lfloor x \rfloor + \Big\lfloor x + \frac1n \Big\rfloor
     + \ldots + \Big\lfloor x + \frac{n-1}n \Big\rfloor
     =  \lfloor nx \rfloor.
  \end{equation}  
\end{lem}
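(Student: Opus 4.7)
My plan is to prove Hermite's identity by reducing to a fundamental domain under the shift $x \mapsto x + 1/n$. Let
$$f(x) = \sum_{k=0}^{n-1} \Big\lfloor x + \frac{k}{n} \Big\rfloor - \lfloor nx \rfloor$$
denote the difference of the two sides of (\ref{HermI}). I would first establish that $f$ has period $1/n$, and then verify directly that $f \equiv 0$ on the interval $[0, 1/n)$. Since every real number is congruent modulo $1/n$ to a point of this interval, these two facts together force $f \equiv 0$ everywhere.

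For the periodicity, observe that replacing $x$ by $x + 1/n$ merely re-indexes the sum on the left of (\ref{HermI}):
$$\sum_{k=0}^{n-1} \Big\lfloor x + \tfrac{k+1}{n} \Big\rfloor = \sum_{k=1}^{n} \Big\lfloor x + \tfrac{k}{n} \Big\rfloor = \sum_{k=0}^{n-1} \Big\lfloor x + \tfrac{k}{n} \Big\rfloor + \big(\lfloor x + 1 \rfloor - \lfloor x \rfloor\big),$$
and the telescoping difference equals $1$. Meanwhile, $\lfloor n(x + 1/n) \rfloor = \lfloor nx + 1 \rfloor = \lfloor nx \rfloor + 1$, so the right side of (\ref{HermI}) also grows by exactly $1$. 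Hence $f(x + 1/n) = f(x)$.

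For the base case, suppose $0 \leq x < 1/n$. Then $0 \leq x + k/n < (k+1)/n \leq 1$ for each $k = 0, 1, \ldots, n-1$, so every floor on the left is $0$; and $0 \leq nx < 1$ gives $\lfloor nx \rfloor = 0$. Thus $f$ vanishes on $[0, 1/n)$, and by the periodicity just established, $f \equiv 0$ on all of $\R$. There is no serious obstacle here; the only point worth checking carefully is the telescoping in the periodicity step, which is immediate from $\lfloor x + 1 \rfloor = \lfloor x \rfloor + 1$, valid for every real $x$.
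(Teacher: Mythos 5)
Your proof is correct and is essentially the paper's own argument: both define the difference $f(x)$, show it has period $\frac1n$, and check that it vanishes on $[0,\frac1n)$. You simply spell out the telescoping step that the paper leaves as ``immediately seen.''
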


Hermite \cite{Hermite} proved this identity using generating functions;
the elementary proof given here can be found in \cite[Ch. 12]{SA}.

\begin{proof}
Consider the function
$$ f(x) = \lfloor x \rfloor + \Big\lfloor x + \frac1n \Big\rfloor
     + \ldots + \Big\lfloor x + \frac{n-1}n \Big\rfloor
     -  \lfloor nx \rfloor. $$
It is immediately seen that $f(x + \frac1n) = f(x)$ and that
$f(x) = 0$ for $0 \le x < \frac1n$. Thus $f(x) = 0$ for all real values
of $x$, and this proves the claim.
\end{proof}

Applying Hermite's identity (\ref{HermI}) with $x = \frac ap$ and
$n = q$ to the sum $\mu$ and using the fact that
$\lfloor \frac ap + \frac bq \rfloor = 0$ whenever $a \in A$ and $b \in B$,
we find 
\begin{align*}
  \mu & = \sum_{a \in A} \Big\lfloor \frac{aq}p \Big\rfloor
    = \sum_{a \in A} \sum_{b=0}^{q-1} \Big\lfloor \frac ap + \frac bq \Big\rfloor 
    = \sum_{a \in A} \sum_{b=n+1}^{q-1}
      \Big\lfloor \frac ap + \frac bq \Big\rfloor \\
  & = \sum_{a \in A} \sum_{b=1}^{n}
      \Big\lfloor \frac ap + \frac{q-b}q \Big\rfloor 
    = \sum_{a \in A} \sum_{b \in B}
     \bigg( \Big\lfloor \frac ap - \frac bq + 1 \Big\rfloor \bigg)
     \qquad \text{and, similarly,} \\
  \nu & = \sum_{b=1}^m \Big\lfloor \frac{bp}q \Big\rfloor
        = \sum_{a \in A} \sum_{b \in B}
        \Big\lfloor \frac bq - \frac ap + 1 \Big\rfloor .
\end{align*}
Clearly $\lfloor \frac ap - \frac bq +1 \rfloor = 1$ if
$\frac ap - \frac bq > 0$ and  $\lfloor \frac ap - \frac bq  +1 \rfloor = 0$
if $\frac ap - \frac bq < 0$; this implies that
$ \lfloor \frac ap - \frac bq + 1 \rfloor
+ \lfloor \frac bq - \frac ap + 1 \rfloor  = 1$, and we find
$$ \mu + \nu =
    \sum_{a \in A} \sum_{b \in B} \Big\lfloor \frac ap - \frac bq + 1 \Big\rfloor
 +  \sum_{a \in A} \sum_{b \in B} \Big\lfloor \frac bq - \frac ap + 1 \Big\rfloor
    = \frac{p-1}2 \frac{q-1}2 . $$


\begin{thebibliography}{99}
  
\bibitem{Hermite} Ch. Hermite,
  {\em Sur quelques cons\'equences arithm\'etiques des formules
    de la th\'eorie des fonctions elliptiques},
  Extrait du Bulletin de l'Acad. Sci. St. P\'etersb.
  XXIX., Acta Math. {\bf 5} (1884), 297--330; Oeuvres {\bf 4}
  (1917), 138--168
  %

\bibitem{SA} S. Savchev, T. Andreescu,
  {\em Mathematical miniatures},
  MAA 2003
  %
    
\end{thebibliography}
\end{document}